\newcommand{\C}{\mathbb{C}}
\newcommand{\A}{\alpha}
\newcommand{\B}{\beta}
\newtheorem{thm}{Theorem}[section]
\theoremstyle{definition}
\theoremstyle{remark}
\title{Classification of Low-dimensional Complex Triassociative Algebras}
\author{Erik Mainellis}
\date{}
\begin{document}

\maketitle

\begin{abstract}
    The paper concerns associative trialgebras, also known as triassociative algebras, which were first studied by Loday and Ronco in 2001. These generalize Loday's associative dialgebras (diassociative algebras) and are characterized by 3 operations and 11 identities. The paper details the classification of 1-dimensional and 2-dimensional triassociative algebras over a complex vector space.
\end{abstract}

\section{Introduction}
In 2001, Loday and Ronco showed that the family of chain modules over the standard simplices can be equipped with an operad structure (see \cite{loday tri}). Algebras over this operad are called associative trialgebras, or \textit{triassociative algebras}, and are characterized by 3 operations and 11 identities. The operad is Koszul and dual to the operad that corresponds to \textit{dendriform trialgebras} (see \cite{loday tri} and \cite{loday trialgebras}). Triassociative algebras generalize Loday's associative dialgebras, or \textit{diassociative algebras}, which were introduced in \cite{loday dialgebras}. Both diassociative and triassociative algebras have connections with algebraic topology, among other fields, and a complete classification of complex diassociative algebras has been obtained up to dimension 3 (see \cite{basri 3}). Furthermore, nilpotent diassociative algebras have been classified up to dimension 4 (see \cite{basri}). The cohomology of dialgebras was developed in \cite{loday dialgebras}, and that of triassociative algebras was studied in \cite{yau}. The objective of the present paper is to classify complex triassociative algebras up to dimension 2.

\section{Preliminaries}
Explicitly, a diassociative algebra $(D,\vdash,\dashv)$ consists of a vector space $D$ equipped with two bilinear operations $\vdash,\dashv:D\times D\xrightarrow{} D$ that satisfy
\begin{align*}
    (x\vdash y)\vdash z = x\vdash(y\vdash z) &~~~~~ \text{A1} & (x\dashv y)\dashv z = x\dashv(y\dashv z) &~~~~~ \text{A2} \\
    (x\dashv y)\vdash z = x\vdash (y\vdash z) &~~~~~ \text{D1} &(x\dashv y)\dashv z = x\dashv (y\vdash z) &~~~~~ \text{D2} \\
    (x\vdash y)\dashv z = x\vdash (y\dashv z) &~~~~~ \text{S1} & \text{self}
\end{align*}
for all $x,y,z\in D$. Note that there is a vertical symmetry between the two columns that reflects the order of operations and swaps $\vdash$ and $\dashv$. In this sense, S1 is self-symmetric. We also note that A1 and A2 yield associative algebras $(D,\vdash)$ and $(D,\dashv)$ respectively. The triassociative axioms build on these ones and introduce a third operation. A triassociative algebra $(T,\vdash,\dashv,\perp)$ is a vector space $T$ equipped with three bilinear products $\vdash,\dashv,\perp:T\times T\xrightarrow{} T$ such that $(T,\vdash,\dashv)$ is a diassociative algebra and
\begin{align*}
(x\perp y)\vdash z = x\vdash(y\vdash z) &~~~~~ \text{T1} & (x\dashv y)\dashv z = x\dashv(y\perp z) &~~~~~ \text{T2}\\ 
(x\vdash y)\perp z = x\vdash(y\perp z) &~~~~~\text{T3} &(x\perp y)\dashv z = x\perp (y\dashv z) &~~~~~ \text{T4} \\
(x\dashv y)\perp z = x\perp(y\vdash z) & ~~~~~\text{S2} & \text{self} \\
(x\perp y)\perp z = x\perp(y\perp z) & ~~~~~\text{A3} & \text{self}
\end{align*}
for all $x,y,z\in T$. We note that there is again a symmetry between the columns that reflects the order of operations and swaps $\vdash$ and $\dashv$. Moreover, $(T,\perp)$ forms an associative algebra.

The proofs in this paper follow a similar procedure to that in \cite{basri 3}. Therein, the authors classify diassociative algebras by fixing associative structures on one of the operations and deducing all possible diassociative structures from there. We thus provide the complete list of low-dimensional complex associative algebras here for their use in triassociative classification. The list in Theorem \ref{as 2} can be found in \cite{basri 3}, but we make a slight correction to $As_2^6$. Theorem \ref{as 1} is easily verified via a change of basis. For the rest of the paper, we use $x$ and $y$ to denote the basis elements of our algebras. In each class of algebra, all non-specified multiplications of basis elements are assumed to be zero.

\begin{thm}\label{as 1}
Let $A$ be a 1-dimensional complex associative algebra. Then $A$ is either abelian or isomorphic to an algebra with multiplication $xx = x$.
\end{thm}

\begin{thm}\label{as 2}
Let $A$ be a 2-dimensional complex associative algebra. Then $A$ is isomorphic to one of the following pairwise non-isomorphic associative algebras:
\begin{enumerate}
    \item[] $As_2^1$ : abelian;
    \item[] $As_2^2$ : $xx = x$;
    \item[] $As_2^3$ : $xx = y$;
    \item[] $As_2^4$ : $xx = x$, $xy = y$;
    \item[] $As_2^5$ : $xx = x$, $yx = y$;
    \item[] $As_2^6$ : $xx = x$, $xy = y$, $yx = y$;
    \item[] $As_2^7$ : $xx = x$, $yy = y$.
\end{enumerate}
\end{thm}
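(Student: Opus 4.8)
The plan is to run a structure-constant classification and then quotient by change of basis. Fix a basis $\{x,y\}$ of $A$ and record the four products
\begin{align*}
xx &= a_1 x + a_2 y, & xy &= b_1 x + b_2 y, \\
yx &= c_1 x + c_2 y, & yy &= d_1 x + d_2 y,
\end{align*}
with eight parameters in $\C$. First I would impose associativity $(uv)w = u(vw)$ on every triple $u,v,w\in\{x,y\}$; comparing the $x$- and $y$-coefficients turns the eight triples into a system of quadratic equations in the parameters, so that classifying associative structures amounts to describing this solution set up to the $GL_2(\C)$-action coming from linear changes of basis.

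Rather than solve the system blindly, I would organize the $GL_2(\C)$-orbits by the conceptual invariant $\dim A^2$, where $A^2 = \spa\{uv: u,v\in A\}$ is the derived subspace (preserved by isomorphism). The case $\dim A^2 = 0$ is the abelian algebra $As_2^1$. For the remaining cases the key dichotomy is whether $A$ is nilpotent. If $A$ is nilpotent and nonabelian then $\dim A^2 = 1$ with $A^3 = 0$, and a rescaling of a basis vector spanning $A^2$ yields $xx = y$, i.e.\ $As_2^3$. If $A$ is not nilpotent, then over $\C$ it contains a nonzero idempotent, which I normalize to be the basis vector $x$, forcing $xx = x$. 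Writing the Peirce decomposition of $y$ relative to $x$ (equivalently, using the associativity relations to pin down $xy$, $yx$, and $yy$) then splits the non-nilpotent case into the five algebras $As_2^2, As_2^4, As_2^5, As_2^6, As_2^7$, according to how $x$ acts on the complementary vector and whether a second idempotent is present.

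The last step is to confirm the seven algebras are pairwise non-isomorphic, for which I would tabulate invariants: $\dim A^2$ separates $As_2^1$ (dimension $0$) from $\{As_2^2, As_2^3\}$ (dimension $1$) from $\{As_2^4,\dots,As_2^7\}$ (dimension $2$); nilpotency separates $As_2^3$ from $As_2^2$; and commutativity together with the count of nonzero idempotents and the existence of a one- or two-sided identity separates the four algebras of derived dimension $2$ (in particular distinguishing the local algebra $As_2^6$ from $As_2^7\cong\C\times\C$ and the two anti-isomorphic one-sided-identity algebras $As_2^4$ and $As_2^5$). I expect the main obstacle to be the bookkeeping in the non-nilpotent branch: choosing the basis vector complementary to the idempotent so that the leftover product $yy$ is simultaneously reduced without disturbing $xx = x$, and verifying that the normalizations are exhaustive and non-overlapping. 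This is also where care is needed to land on the corrected representative for $As_2^6$.
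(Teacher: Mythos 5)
Your proposal is correct, but note that there is no proof in the paper to mirror: the theorem is imported from \cite{basri 3} (with a correction to the representative of $As_2^6$), and the paper only proves its triassociative classifications, by fixing structure constants and solving the resulting quadratic relations. Your route is therefore genuinely different and more structural. Rather than solving the eight-parameter associativity system modulo the $GL_2(\C)$-action --- which is essentially the method of \cite{basri 3} and of this paper's own proofs --- you stratify by the invariant $\dim A^2$, split off the nilpotent case (where indeed $\dim A^2 = 1$, $A^3 = 0$, and a rescaling gives $xx = y$), and handle the non-nilpotent case via a nonzero idempotent $e$ and its Peirce decomposition. That argument does close up cleanly in dimension $2$: since $e \in eAe$, the complement of $\C e$ must lie entirely in a single Peirce component (the components $eA(1-e)$, etc., interpreted formally, as $A$ need not be unital), and the four subcases yield precisely $As_2^6$ or $As_2^7$ (when $eAe$ is two-dimensional, so $A$ is unital and hence $\C[t]/(t^2)$ or $\C\times\C$), $As_2^4$, $As_2^5$, and $As_2^2$ or $As_2^7$ (according to whether the square of the complementary vector vanishes); your invariants --- $\dim A^2$, nilpotency, commutativity, one-sided identities, idempotent count --- do separate all seven classes. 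In a full write-up you should justify two standard facts you invoke: that a finite-dimensional non-nilpotent associative algebra contains a nonzero idempotent (nil implies nilpotent in finite dimension, then pass to the subalgebra generated by a non-nilpotent element), and that the only two-dimensional unital complex algebras are $\C[t]/(t^2)$ and $\C\times\C$. What your approach buys is a self-contained conceptual verification of the list, which is particularly valuable here: since the paper asserts a correction to the published table, your Peirce argument independently confirms that the corrected $As_2^6$ (namely $\C[t]/(t^2)$, with $x$ a unit and $y^2 = 0$) is the right representative.
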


\section{Classification of 1-dimensional complex triassociative algebras}

\begin{thm}
Let $T$ be a 1-dimensional complex triassociative algebra. Then $T$ is isomorphic to one of the following pairwise non-isomorphic triassociative algebras:
\begin{enumerate}
    \item[] $Trias_1^1$ : abelian;
    \item[] $Trias_1^2$ : $x\perp x = x$;
    \item[] $Trias_1^3$ : $x\vdash x = x\dashv x = x\perp x = x$.
\end{enumerate}
\end{thm}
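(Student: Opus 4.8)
The plan is to exploit the fact that $T$ is one-dimensional, so that all three products are encoded by three scalars. Fix a basis vector $x$ and write $x\vdash x = ax$, $x\dashv x = bx$, and $x\perp x = cx$ for some $a,b,c\in\C$. Because each of the eleven axioms is a multilinear identity in its three arguments, an axiom holds on all of $T$ if and only if it holds on the single triple $(x,x,x)$; thus checking the axioms reduces to a finite scalar computation. I would evaluate both sides of all eleven identities on $(x,x,x)$, rewriting every product as a scalar multiple of $x$.

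Carrying this out, the associativity axioms A1, A2, A3 together with S1, T3, and T4 collapse to tautologies ($a^2=a^2$, $b^2=b^2$, $c^2=c^2$, $ab=ab$, $ac=ac$, $bc=bc$), so they impose no constraint. The content lies in the remaining five: D1 gives $a(a-b)=0$, D2 gives $b(a-b)=0$, T1 gives $a(a-c)=0$, T2 gives $b(b-c)=0$, and S2 gives $c(a-b)=0$. The next step is a short case analysis on these. If $a\neq b$, then D1 and D2 force $a=b=0$, a contradiction; hence $a=b$ in every case, which automatically satisfies D1, D2, and S2. With $a=b$, both T1 and T2 reduce to $a(a-c)=0$, so either $a=0$ (leaving $c$ unconstrained) or $a=c$. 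This produces exactly three families of structure constants: $(a,b,c)=(0,0,0)$; $(0,0,c)$ with $c\neq 0$; and $(a,a,a)$ with $a\neq 0$.

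Finally, I would normalize and separate the isomorphism classes. A change of basis $x\mapsto\lambda x$ scales each of the three structure constants by $\lambda$, so the family $(0,0,c)$ with $c\neq 0$ rescales (take $\lambda=c^{-1}$) to $x\perp x = x$, and the family $(a,a,a)$ with $a\neq 0$ rescales to $x\vdash x = x\dashv x = x\perp x = x$; this yields the representatives $Trias_1^1$, $Trias_1^2$, and $Trias_1^3$. Pairwise non-isomorphism is then immediate: any triassociative isomorphism carries a zero product to a zero product, so the property that a given operation vanishes identically is an invariant, and the three algebras have distinct vanishing patterns (all three zero; only $\perp$ nonzero; all three nonzero). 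I do not anticipate a genuine obstacle here — the work is essentially bookkeeping — and the only points demanding care are justifying the reduction to the triple $(x,x,x)$ and confirming that the five surviving equations have been solved exhaustively, so that no fourth family is overlooked.
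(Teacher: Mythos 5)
Your proposal is correct and follows essentially the same route as the paper: both reduce the eleven axioms to scalar equations by evaluating on $(x,x,x)$, find that D1/D2 force $x\dashv x$ to match $x\vdash x$, that T1/T2 tie $\perp$ to them unless $\vdash$ vanishes, and finish with a rescaling of the basis. The only cosmetic difference is that the paper first normalizes the $\vdash$-structure to $a\in\{0,1\}$ via the $1$-dimensional associative classification and then splits into cases, whereas you keep all three constants $(a,b,c)$ free, solve the system symmetrically, and normalize at the end — the resulting equations and case split are identical.
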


\begin{proof}
Let $\{x\}$ be a basis for $T$. We begin by choosing an associative structure on $(T,\vdash)$, which must either be abelian or such that $x\vdash x = x$. We first assume that it is abelian and denote $x\dashv x = \alpha x$ and $x\perp x = \beta x$ for some $\alpha,\beta\in \C$. The restrictions on these coefficients can be found by plugging $x$'s into the axioms of triassociative algebras. We first note that axioms A1, D1, and S1 yield only $0=0$, as both sides of each equation have a $\vdash$ multiplication. Computing both sides of $(x\dashv x)\dashv x = x\dashv (x\vdash x)$ (D2), however, yields $\alpha^2x = 0$, which means that $\alpha$ must be zero, and so $x\dashv x = 0$. Thus, axioms A2, T1, T2, T3, T4, and S2 are all trivial since both sides of each equation have either a $\vdash$ or $\dashv$. Axiom A3, however, yields $\beta^2x = \beta^2x$, and so no restrictions are placed on $\beta$. If $\beta=0$, we obtain $T\cong Trias_1^1$. If $\beta\neq 0$, a change of basis yields $Trias_1^2$.

Now assume that $x\vdash x = x$ and let $x\dashv x = \alpha x$ and $x\perp x = \beta x$ once more. Axioms S1 and A1 yield only trivial equalities. Let us consider D1; computing both sides of $(x\dashv x)\vdash x = x\vdash(x\vdash x)$ yields $\alpha x = x$, and we obtain $\alpha=1$. Computing $(x\dashv x)\dashv x = x\dashv (x\perp x)$ yields $x=\beta x$, and so $\beta =1$. All other axioms give nothing new, and so $T$ must be isomorphic to $Trias_1^3$.
\end{proof}

\section{Classification of 2-dimensional complex triassociative algebras}

\begin{thm}\label{tri 2 dim}
Let $T$ be a 2-dimensional complex triassociative algebra. Then $T$ is isomorphic to one of the following pairwise non-isomorphic triassociative algebras:
\begin{enumerate}
    \item[] $Trias_2^1$ : abelian;
    \item[] $Trias_2^2$ : $y\perp y = x$;
    \item[] $Trias_2^3$ : $y\perp y = \alpha x + y$ where $\alpha\in \C$;
    \item[] $Trias_2^4$ : $y\perp x = x$, $y\perp y = \alpha y$ where $\alpha\in \C\setminus \{0\}$;
    \item[] $Trias_2^5$ : $x\perp y =x$, $y\perp y = \alpha y$ where $\alpha\in \C\setminus \{0\}$;
    \item[] $Trias_2^6$ : $x\perp y = y\perp x = x$, $y\perp y = \alpha x + y$ where $\alpha\in \C$;
    \item[] $Trias_2^7$ : $x\perp x = x$, $y\perp y = \alpha y$ where $\alpha\in \C$;
    \item[] $Trias_2^8$ : $x\perp x = x$, $y\perp x = y$;
    \item[] $Trias_2^9$ : $x\perp x = x$, $x\perp y = y$, $y\perp x = \alpha x$, $y\perp y = \alpha y$ where $\alpha\in \C$;
    \item[] $Trias_2^{10}$ : $x\perp x = x$, $x \perp y = y$, $y\perp x = y$, $y\perp y = \alpha x + \beta y$ where $\alpha,\beta\in \C$;
    \item[] $Trias_2^{11}$ : $x\perp x = x$, $x\perp y = y\perp x = \alpha x$, $y\perp y = \alpha y$ where $\alpha\in \C\setminus\{0\}$;
    \item[] $Trias_2^{12}$ : $x\perp x = x$, $x\perp y = \alpha x$, $y\perp x = y$, $y\perp y = \alpha y$ where $\alpha\in \C\setminus \{0\}$;
    \item[] $Trias_2^{13}$ : $x\perp x = \alpha x + y$ where $\alpha\in \C$;
    \item[] $Trias_2^{14}$ : $x\perp x = \alpha_1 x + \beta y$, $x\perp y = y\perp x = \gamma y$, $y\perp y = \alpha_2 y$ where \[\gamma = \frac{\alpha_1 \pm \sqrt{\alpha_1^2 + 4\beta\alpha_2}}{2}\] and $\alpha_1,\alpha_2\in \C$, $\beta\in \C\setminus\{0\}$;
    \item[] $Trias_2^{15}$ : $x\perp x = \alpha x + y$, $x\perp y = y\perp x = \beta x$, $y\perp y= \beta y$ where $\alpha\in \C$ and $\beta\in \C\setminus\{0\}$;
    \item[] $Trias_2^{16}$ : $x\perp x = \alpha x + \beta_1 y$, $x\perp y = y\perp x = \gamma_1 x + \gamma_2 y$, $y\perp y = \beta_2 x$ where \[\gamma_1 = \sqrt[3]{-\frac{\beta_1\beta_2^2}{2} + \sqrt{\left(-\frac{\beta_1\beta_2^2}{2}\right)^2 + \left(-\frac{\alpha\beta_2}{3}\right)^3}} + \sqrt[3]{-\frac{\beta_1\beta_2^2}{2} - \sqrt{\left(-\frac{\beta_1\beta_2^2}{2}\right)^2 + \left(-\frac{\alpha\beta_2}{3}\right)^3}},\] \[\gamma_2 = \frac{\alpha\pm \sqrt{\alpha^2 - 4\beta_1\gamma_1}}{2},\] and $\alpha\in \C$, $\beta_1,\beta_2\in \C\setminus\{0\}$;
    \item[] $Trias_2^{17}$ : $x\perp x = \alpha x + \beta_1 y$, $x\perp y = y\perp x = \gamma_1 x + \gamma_2 y$, $\beta_2 x + \beta_3 y$ where \[\gamma_2 = \sqrt[3]{\left(-\frac{b^3}{27} + \frac{bc}{6} - \frac{d}{2}\right) + \sqrt{\left(-\frac{b^3}{27} + \frac{bc}{6} - \frac{d}{2}\right)^2 + \left(\frac{c}{3} - \frac{b^2}{9}\right)^3}}~~~~~~~~~~~~~~~~~~~\] \[+ \sqrt[3]{\left(-\frac{b^3}{27} + \frac{bc}{6} - \frac{d}{2}\right) - \sqrt{\left(-\frac{b^3}{27} + \frac{bc}{6} - \frac{d}{2}\right)^2 + \left(\frac{c}{3} - \frac{b^2}{9}\right)^3}} - \frac{b}{3},\] \[\gamma_1 = \frac{\beta_3\pm \sqrt{\beta_3^2 +4(\gamma_2\beta_2 + \alpha\beta_2)}}{2},\] $b=-\alpha$, $c=-\beta_1\beta_3$, $d=\beta_1^2\beta_2$, and $\alpha\in \C$, $\beta_1,\beta_2,\beta_3\in\C\setminus\{0\}$;
    \item[] $Trias_2^{18}$ : $y\dashv y = y$, $x\perp x = \alpha x$ where $\alpha\in \C$;
    \item[] $Trias_2^{19}$ : $y\dashv y = x$, $y\perp y = \alpha x$ where $\alpha\in \C$;
    \item[] $Trias_2^{20}$ : $x\dashv x = y$, $x\perp x = \alpha y$ where $\alpha\in\C$;
    \item[] $Trias_2^{21}$ : $y\dashv x = x$;
    \item[] $Trias_2^{22}$ : $x\dashv y = x$, $y\dashv x = \alpha x$ where $\alpha\in\C$;
    \item[] $Trias_2^{23}$ : $x\dashv y = x$, $y\perp x = \alpha x$ where $\alpha\in\C$;
    \item[] $Trias_2^{24}$ : $x\dashv x = \gamma_1 x + \alpha_1 y$, $x\dashv y = y\dashv x = -\gamma_2 x - \gamma_1 y$, $y\dashv y = \alpha_2 x + \gamma_2 y$, $x\perp x = \delta_1x + \frac{\alpha_1\beta}{\alpha_2}y$, $x\perp y = y\perp x = -\delta_2 x - \delta_1 y$, $y\perp y = \beta x + \delta_2 y$ where \[\gamma_1 = \sqrt[3]{\alpha_1^2\alpha_2}, ~\gamma_2 = \sqrt[3]{\alpha_1\alpha_2^2},~ \delta_1 = \sqrt[3]{\frac{\alpha_1^2}{\alpha_2^2}}\beta, ~ \delta_2 = \sqrt[3]{\frac{\alpha_1}{\alpha_2}}\beta\] and $\alpha_1,\alpha_2\in\C\setminus\{0\}$, $\beta\in \C$;
%All previous have As_2^1 vdash structure. We now begin those with As_2^2 vdash structure.
    \item[] $Trias_2^{25}$ : $x\vdash x = x$, $x\dashv x = x$, $y\dashv x = y$, $x\perp x = x$, $y\perp x = y$;
    \item[] $Trias_2^{26}$ : $x\vdash x = x$, $x\dashv x = x$, $y\dashv x = y$, $x\perp x = x$;
    \item[] $Trias_2^{27}$ : $x\vdash x = x$, $x\dashv x = x$, $x\perp x = x$, $y\perp y = \alpha y$ where $\alpha\in\C$;
%Here begins As_2^3 vdash structure.
    \item[] $Trias_2^{28}$ : $x\vdash x = y$, $x\dashv x = \alpha y$, $x\perp x = \beta y$ where $\alpha,\beta\in\C$;
%Here begins As_2^4 vdash structure.
    \item[] $Trias_2^{29}$ : $x\vdash x = x$, $x\vdash y = y$, $x\dashv x = x$, $x\perp x = x$;
    \item[] $Trias_2^{30}$ : $x\vdash x = x$, $x\vdash y = y$, $x\dashv x =x$, $x\perp x = x$, $x\perp y = y$;
    \item[] $Trias_2^{31}$ : $x\vdash x = x$, $x\vdash y = y$, $x\dashv x = x$, $y\dashv x = y$, $x\perp x = x$, $y\perp y = \alpha y$ where $\alpha\in\C$;
    \item[] $Trias_2^{32}$ : $x\vdash x = x$, $x\vdash y = y$, $x\dashv x = x$, $y\dashv x = y$, $x\perp x = x$, $y\perp x = y$;
    \item[] $Trias_2^{33}$ : $x\vdash x = x$, $x\vdash y = y$, $x\dashv x = x$, $y\dashv x = y$, $x\perp x = x$, $x\perp y = y$;
    \item[] $Trias_2^{34}$ : $x\vdash x = x$, $x\vdash y = y$, $x\dashv x = x$, $y\dashv x = y$, $x\perp x = x$, $x\perp y = y\perp x = y$, $y\perp y = \alpha y$ where $\alpha\in \C$;
    \item[] $Trias_2^{35}$ : $x\vdash x = x$, $x\vdash y = y$, $x\dashv x = x$, $y\dashv x = y$, $x\perp x = x+\alpha y$ where $\alpha\in\C\setminus\{0\}$;
    \item[] $Trias_2^{36}$ : $x\vdash x = x$, $x\vdash y = y$, $x\dashv x = x$, $y\dashv x = y$, $x\perp x = x+\alpha y$, $x\perp y = y\perp x = \gamma y$, $y\perp y = \beta y$ where \[\gamma = \frac{1\pm \sqrt{1+4\alpha\beta}}{2}\] and $\alpha\in\C\setminus\{0\}$, $\beta\in \C$;
    \item[] $Trias_2^{37}$ : $x\vdash x = x$, $x\vdash y = y$, $x\dashv x = x$, $x\dashv y = y$, $x\perp x = x$, $x\perp y = y$;
    \item[] $Trias_2^{38}$ : $x\vdash x = x$, $x\vdash y = y$, $x\dashv x = x\perp x = x+\alpha y$ where $\alpha\in\C\setminus\{0\}$;
    \item[] $Trias_2^{39}$ : $x\vdash x = x$, $x\vdash y = y$, $x\dashv x = x+\alpha y$, $x\perp x = x$, $x\perp y = y$ where $\alpha\in \C\setminus\{0\}$;
%Here begins As_2^5 vdash structure.
    \item[] $Trias_2^{40}$ : $x\vdash x = x$, $y\vdash x = y$, $x\dashv x = x$, $y\dashv x = y$, $x\perp x = x$, $y\perp x = y$;
%Here begins As_2^6 vdash structure.
    \item[] $Trias_2^{41}$ : $x\vdash x = x$, $x\vdash y = y\vdash x = y$, $x\dashv x = x$, $x\dashv y = y\dashv x = y$, $x\perp x = x$, $x\perp y = y\perp x = y$;
%Here begins As_2^7 vdash structure.
    \item[] $Trias_2^{42}$ : $x\vdash x = x$, $y\vdash y = y$, $x\dashv x = x$, $y\dashv y = y$, $x\perp x = x$, $y\perp y = y$.
\end{enumerate}
\end{thm}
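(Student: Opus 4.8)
The plan is to exploit the fact that each of the three operations is individually associative. By axiom A1 the pair $(T,\vdash)$ is a $2$-dimensional complex associative algebra, so Theorem \ref{as 2} lets me fix a basis $\{x,y\}$ in which $\vdash$ is exactly one of the seven forms $As_2^1,\dots,As_2^7$. Likewise A2 and A3 force $(T,\dashv)$ and $(T,\perp)$ to be associative, so each of them must also be isomorphic to one of those seven, though only up to a change of basis that simultaneously preserves the already-fixed $\vdash$ (so in the chosen basis they need not be in canonical form). The classification then becomes: for each of the seven choices of $\vdash$, determine every pair $(\dashv,\perp)$ of associative products compatible with $\vdash$ under the eight remaining mixed axioms D1, D2, S1, S2, T1--T4, and reduce the result to canonical form.

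For a fixed $\vdash$ I would write the unknown operations with general structure constants, say $x\dashv x = a_1 x + a_2 y$ and so on (eight coefficients for $\dashv$, eight for $\perp$), and substitute all $2^3=8$ assignments of basis elements into each of the eleven defining identities. This yields a finite system of polynomial equations in the sixteen unknowns. For most choices of $\vdash$ a large block of these equations is vacuous, e.g.\ when $\vdash$ is abelian every identity containing a $\vdash$ on either side reduces to $0=0$, so the effective system is tractable. I would also use the vertical $\vdash\leftrightarrow\dashv$ symmetry recorded in the preliminaries to transport the solution of one identity to its mirror image, halving the amount of substitution needed.

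Having solved the polynomial system for a given $\vdash$, I would use the residual freedom, namely the automorphism group of $(T,\vdash)$ in its fixed form, to normalize $\dashv$ and $\perp$. When $\vdash$ is abelian this group is all of $GL_2(\C)$ and permits aggressive scaling and shearing; for the other six forms it is a proper subgroup allowing only limited rescaling. This step collapses the raw solution set into the explicit families $Trias_2^1,\dots,Trias_2^{42}$ and pins down the exact ranges of the surviving parameters $\alpha,\beta,\gamma,\dots$. Pairwise non-isomorphism is then read off from invariants that every triassociative isomorphism must preserve: first the isomorphism type of $(T,\vdash)$, which already separates the list into the blocks $1$--$24$, $25$--$27$, $28$, $29$--$39$, $40$, $41$, $42$ according to the seven associative types, and within each block the finer data of the images and annihilators of the remaining two products.

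The main obstacle is algebraic bookkeeping rather than any single conceptual leap. Two difficulties compound: the volume of cases, since each $\vdash$-type spawns several branches once the products of $\dashv$ and $\perp$ are allowed to be nilpotent, idempotent, or mixed; and the fact that for several branches the normalization equations are genuinely nonlinear. In the $\vdash$-abelian case eliminating parameters forces cubic relations whose Cardano solutions produce the cube-root expressions of $Trias_2^{16}$, $Trias_2^{17}$, and $Trias_2^{24}$, together with quadratic relations giving the square roots of $Trias_2^{14}$, while the analogous computation for $\vdash\cong As_2^4$ yields the radical in $Trias_2^{36}$. Tracking the $\pm$ branches and the choice of cube root, excluding the degenerate parameter values that would make two families coincide, and verifying that every listed algebra genuinely satisfies all eleven axioms, is where the bulk of the careful work lies.
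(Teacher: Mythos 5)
Your proposal follows essentially the same route as the paper: fix $(T,\vdash)$ in one of the canonical forms $As_2^1,\dots,As_2^7$ from Theorem \ref{as 2}, write $\dashv$ and $\perp$ with sixteen unknown structure constants, substitute all eight orderings of basis elements into the eleven axioms to get a polynomial system, solve it, and normalize the solutions by changes of basis preserving the fixed $\vdash$. Your additional remarks (exploiting the $\vdash\leftrightarrow\dashv$ opposite-algebra symmetry and organizing non-isomorphism by the associative type of $(T,\vdash)$) are sound refinements of, not departures from, the paper's procedure.
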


\begin{proof}
Given our triassociative algebra $(T,\vdash,\dashv,\perp)$ with basis $\{x,y\}$, the proof proceeds by considering the cases as $(T,\vdash)$ ranges over $As_2^i$. Here, we will detail the case of $(T,\vdash) = As_2^2$, as it provides a reasonable demonstration of the procedure and returns several isomorphism classes, but is not excessively long. The other cases follow by the same logic. We thus set $x\vdash x = x$ and denote
\begin{align*}
    x\dashv x = \A_1x + \A_2y && x\dashv y = \A_3x + \A_4y \\ y\dashv x = \A_5x + \A_6y && y\dashv y = \A_7x + \A_8y \\ x\perp x = \B_1x + \B_2y && x\perp y = \B_3x + \B_4y \\ y\perp x = \B_5x + \B_6y && y\perp y = \B_7x + \B_8y
\end{align*} \noindent for some $\alpha_i,\beta_i\in \C$. We then proceed to plug all orderings of our two basis elements into the three positions in each identity. For the identity $(~\vdash~)\dashv~ = ~\vdash(~\dashv~)$, for example, this consists of computing the equalities
\begin{align*} (x\vdash x)\dashv x = x\vdash (x\dashv x) && (y\vdash x)\dashv x = y\vdash (x\dashv x) \\ (x\vdash y)\dashv x = x\vdash (y\dashv x) && (x\vdash x)\dashv y = x\vdash (x\dashv y) \\ (y\vdash y)\dashv x = y\vdash (y\dashv x) && (y\vdash x)\dashv y = y\vdash (x\dashv y) \\ (x\vdash y)\dashv y = x\vdash (y\dashv y) && (y\vdash y)\dashv y = y\vdash (y\dashv y)
\end{align*} with orderings $xxx$, $yxx$, $xyx$, $xxy$, $yyx$, $yxy$, $xyy$, $yyy$ of variables. This identity is a particularly good choice for starting the $(T,\vdash) = As_2^2$ case, since it yields $\A_2 = \A_4 = \A_5 = \A_7 =0$. Our multiplication structure on $T$ can thus be rewritten as follows. \begin{align*}
    x\dashv x = \A_1x && x\dashv y = \A_3x \\ y\dashv x = \A_6y && y\dashv y = \A_8y \\ x\perp x = \B_1x + \B_2y && x\perp y = \B_3x + \B_4y \\ y\perp x = \B_5x + \B_6y && y\perp y = \B_7x + \B_8y
\end{align*} Now consider the identity $(~\dashv ~)\dashv~= ~\dashv(~\dashv ~)$. Under our current multiplication, it yields $\A_6^2 = \A_1\A_6$, $\A_3^2 = \A_3\A_8$, and $\A_1\A_3 = \A_3\A_6 =\A_6\A_8$. From $(~\dashv ~)\vdash~= ~\vdash(~\vdash ~)$, we obtain $\A_1 = 1$ and $\A_3 = 0$. Combining these two new collections of relations yields $\A_6^2 = \A_6$ and $\A_6\A_8 = 0$. Applying these restrictions to $(~\dashv ~)\dashv~= ~\dashv(~\vdash ~)$, we get one new fact, that $\A_8^2 = 0$, which implies that $\A_8=0$. Our multiplications can now be rewritten as \begin{align*}
    x\dashv x = x && x\dashv y = 0 \\ y\dashv x = \A_6y && y\dashv y = 0 \\ x\perp x = \B_1x + \B_2y && x\perp y = \B_3x + \B_4y \\ y\perp x = \B_5x + \B_6y && y\perp y = \B_7x + \B_8y
\end{align*} subject to the constraint $\A_6^2 = \A_6$. Before moving on to our relations involving $\perp$, we note that the associativity of $\vdash$ yields nothing new in the context of our current case. We next consider the identity $(~\dashv ~)\dashv~= ~\dashv(~\perp ~)$, which returns $\B_1=1$ and $\B_3 = \B_5 = \B_7 = 0$. Continuing through the triassociative axioms, we obtain $\B_2 = \B_4 = 0$, $\B_6^2 = \B_6 = \A_6\B_6$, and $\A_6\B_8 = 0$. Thus, our multiplications can be rewritten as \begin{align*}
    x\dashv x = x && x\dashv y = 0 \\ y\dashv x = \A_6y && y\dashv y = 0 \\ x\perp x = x && x\perp y = 0 \\ y\perp x = \B_6y && y\perp y = \B_8y
\end{align*} subject to $\A_6^2=\A_6$, $\B_6^2= \B_6 = \A_6\beta_6$, and $\A_6\B_8 = 0$. We now make deductions from these relations on the complex numbers. First, we have $\A_6(\A_6 - 1) = 0$, and so $\A_6$ must equal either 0 or 1. Supposing the former, we obtain $\beta_6 = 0$, and any relations restricting $\B_8$ vanish. In this case, our algebra is isomorphic to $Trias_2^{27}$. Supposing $\A_6 = 1$, we obtain $\B_8 = 0$ and that $\B_6$ is equal to either 0 or 1. If $\B_6=0$, then our algebra is isomorphic to $Trias_2^{26}$. If $\B_6 = 1$, then it is isomorphic to $Trias_2^{25}$.
\end{proof}

We remark that classes $Trias_2^{16}$ and $Trias_2^{17}$ make use of the cubic root formula, which arises based on the relations between coefficients. Moreover, the isomorphism classes of 2-dimensional complex triassociative algebras can be collected based on their associative $\vdash$ structure, and we conclude with a table that arranges them in this manner.

\begin{center}
    \begin{tabular}{c|c}
        Associative Algebra $(T,\vdash)$ & Corresponding Triassociative Algebras $(T,\vdash,\dashv,\perp)$ \\ \hline \\
        $As_2^1$ & $Trias_2^1$, $Trias_2^2$, $Trias_2^3$, \dots, $Trias_2^{24}$ \\[10pt] $As_2^2$ & $Trias_2^{25}$, $Trias_2^{26}$, $Trias_2^{27}$ \\[10pt] $As_2^3$ & $Trias_2^{28}$ \\[10pt] $As_2^4$ & $Trias_2^{29}$, $Trias_2^{30}$, $Trias_2^{31}$, \dots, $Trias_2^{39}$ \\[10pt] $As_2^5$ & $Trias_2^{40}$ \\[10pt] $As_2^6$ & $Trias_2^{41}$ \\[10pt] $As_2^7$ & $Trias_2^{42}$
    \end{tabular}
\end{center}

\bigskip

\end{document}